\documentclass[10pt]{amsart}

\usepackage{amsmath, amssymb, amscd}
\usepackage{eucal}
\usepackage{mathrsfs}
\usepackage[frame,cmtip,arrow,matrix,line,graph,curve]{xy}
\usepackage{graphicx}

\newcommand{\zz}{\mathbb Z}
\newcommand{\nn}{\mathbb N}

\newtheorem{prop}{Proposition}[section]
\newtheorem{thm}[prop]{Theorem}
\newtheorem{lem}[prop]{Lemma}

\theoremstyle{remark}

\theoremstyle{definition}
 
 \newtheorem{defn}[prop]{Definition}

\numberwithin{equation}{section}

\begin{document}
\title[A note on shadowing properties] {A note on shadowing properties}
\author{Hahng-Yun Chu, Dae Hwan Goo and Se-Hyun Ku$^*$}
\address{Dept. of Mathematics, Chungnam National University, 99 Daehak-ro, Yuseong-gu,
Daejeon 305-764, Republic of Korea}
\email{hychu@cnu.ac.kr\ (H.-Y. Chu)}

\address{Dept. of Mathematics, Chungnam National University, 99 Daehak-ro, Yuseong-gu,
Daejeon 305-764, Republic of Korea}
\email{pi3014@hanmail.net\ (D.-H. Goo)}

\address{Dept. of Mathematics, Chungnam National University, 99 Daehak-ro, Yuseong-gu,
Daejeon 305-764, Republic of Korea}
\email{shku@cnu.ac.kr\ (S.-H. Ku)}

\thanks{\it $\ast$ Corresponding author.}

\subjclass[2010]{primary 37C50; secondary 37C10, 54H20}
\keywords{limit shadowing, average shadowing, asymptotic average shadowing, attractor, topologically transitive}

\begin{abstract}
Let $\mathfrak{X}^{1}(M)$ be the space of $C^{1}$-vector fields on $M$ endowed with the $C^{1}$-topology
and let $\Lambda$ be an isolated set for a $X\in\mathfrak{X}^{1}(M)$.
In this paper, we directly prove that every $X\in\mathfrak{X}^{1}(M)$ having the (asymptotic) average shadowing property
in $\Lambda$ has no proper attractor in $\Lambda$. Our proof is a direct version of the results by Gu and Ribeiro (see ~\cite{Gu2-2009, Gu1-2005, Ri-}).
We also show that every $X\in\mathfrak{X}^{1}(M)$
having the (two-sided) limit shadowing property with a gap in $\Lambda$ is topologically transitive and has the shadowing property in $\Lambda$.

\end{abstract}

\maketitle

\thispagestyle{empty} \markboth{Hahng-Yun Chu, Dae Hwan Goo and Se-Hyun Ku} {A note on shadowing properties}


\section{Introduction}\label{sec: intro}

\bigskip

Let $M$ be a closed $n$-dimensional Riemannian manifold , where $n\geq3$.
We denote by $\mathfrak{X}^{r}(M)$ the set of all $C^{r}$-vector fields on $M$
endowed with the $C^{r}$-topology and denote by $X_{t}$ the flow of $X\in\mathfrak{X}^{r}(M)$.
Recall that $\mathfrak{X}^{r}(M)$ is a Baire space and every resideul subset of $\mathfrak{X}^{r}(M)$ is dense.

Blank introduced the notion of the average shadowing property in studying the chaotic dynamical systems and
showed that certain kinds of perturbed hyperbolic systems have the average shadowing property (see ~\cite{Bl2-1988,
Bl1-1988}).
Later, Gu posed the notion of asymptotic average shadowing property for flows and showed that a flow with the (asymptotic)
average shadowing property is chain transitive (see ~\cite{Gu2-2009, Gu1-2005}). In ~\cite{Ri-}, Riberio showed that
a vector field $X$ is chain transitive in an isolated set $\Lambda$ if and only if $\Lambda$ has no proper attractor for $X$.
He also proved that a $C^{1}$-vector field with the limit shadowing property has no proper attractor.
Combining the results by Gu and Ribeiro, it is proved that the (asymptotic) average shadowing property in an isolated set 
has no proper attractor in the set.

In this article, we directly prove that every $C^{1}$-vector field $X\in\mathfrak{X}^{1}(M)$ having the (asymptotic) average shadowing property in $\Lambda$
has no proper attractor. Recently the authors in ~\cite{CK-2014} introduced the notion of the two-sided limit shadowing property with a gap for discrete dynamical system and studied several properties related to this. We will generalize this notion to vector fields for an isolated set and prove that every $X\in\mathfrak{X}^{1}(M)$
with the (two sided) limit shadowing property with a gap is topologically transitive and has the shadowing property in $\Lambda$.
We say that an invariant set $\Lambda\subset M$ is \textit{isolated} for a vector field $X$
if there exists an open subset $U$ of $M$ such that $\Lambda=\bigcap_{t\in\mathbb{R}}X_t(U)$.
Now we describe the notions of several shadowing properties.

Fix a $\delta>0$. We say that a sequence $\{(x_i, t_i)\}_{i\in\mathbb{Z}}$ of points $x_{i}\in M$
and positive integers $t_{i}$ is a $\delta$-\textit{pseudo orbit} of a vector field $X$
if for all $i\in\mathbb{Z}$ we have $$d(X_{t_i}(x_i),x_{i+1})<\delta.$$
Define the sequence $(s_n)_{n\in\mathbb{Z}}$ by

$$s_n=\left\{
   \begin{array}{ll}
     0, & n=0\hbox{;} \\
     \sum_{i=0}^{i=n-1}t_i, & n\in\mathbb{N}\hbox{;} \\
     \sum_{i=n}^{-1}t_i, & -n\in\mathbb{N}\hbox{.}
   \end{array}
 \right.
$$

A $\delta$-pseudo orbit $\{(x_i, t_i)\}_{i\in\mathbb{Z}}$ of $X$ is
$\varepsilon$-\textit{shadowed} by the orbit through a point $x$,
if there is an orientation preserving homeomorphism $h:\mathbb{R}\rightarrow\mathbb{R}$
with $h(0)=0$ such that
$$d(X_{h(t)}(x),X_{t-s_i}(x_{i}))<\varepsilon \ \mbox{for all $s_i\leq t\leq s_{i+1}$}, i\in\mathbb{Z}.$$

Let $\Lambda$ be an isolated set for $X\in\mathfrak{X}^{1}(M)$.
We say that the vector field $X$ has the \textit{shadowing property in} $\Lambda$,
if for any $\varepsilon>0$ there is a $\delta>0$ such that every $\delta$-pseudo orbit of $X$ in $\Lambda$ is
$\varepsilon$-shadowed by some real orbit of $X$ in $\Lambda$.
If $\Lambda=M$, then we say that $X$ has the \textit{shadowing property}.
A vector field $X$ is called to be \textit{chain transitive in} $\Lambda$ if for any $x,y\in\Lambda$ and any $\delta>0$
there is a finite $\delta$-pseudo orbit $\{(x_i, t_i)\}_{0\leq i\leq K}$ of $X$ in $\Lambda$
such that $x_{0}=x$ and $x_{K}=y$, where $K\in{\mathbb Z}_{+}$.
If $\Lambda=M$, then we say that $X$ is \textit{chain transitive}.

A sequence $\{(x_i, t_i)\}_{i\in\mathbb{Z}}$ is called a $\delta$-\textit{average-pseudo orbit} of a vector field $X$
if $t_i\geq1$ for every $i\in\mathbb{Z}$ and there exists a positive integer $N$ such that for any $n\geq N$ and $k\in\mathbb{Z}$ it satisfies
$$\frac{1}{n}\sum_{i=1}^{n}d(X_{t_{i+k}}(x_{i+k}),x_{i+k+1})<\delta.$$

A $\delta$-average-pseudo orbit $\{(x_i, t_i)\}_{i\in\mathbb{Z}}$ of a vector field $X$
is called to be \textit{positively (resp. negatively) $\varepsilon$-shadowed in average} by the orbit of $X$ through a point $x$,
if there is an orientation preserving homeomorphism $h:\mathbb{R}\rightarrow\mathbb{R}$ with $h(0)=0$
such that
$$\limsup_{n\rightarrow\infty} \frac{1}{n}\sum_{i=1}^{n}\int_{s_i}^{s_{i+1}}d(X_{h(t)}(x),X_{t-s_{i}}(x_{i}))dt<\varepsilon$$

$$\left(\text{ resp. }\limsup_{n\rightarrow\infty} \frac{1}{n}\sum_{i=1}^{n}\int_{-s_{-i}}^{-s_{-(i-1)}}d(X_{h(t)}(x),X_{t+s_{-i}}(x_{-i}))dt<\varepsilon\right),$$
where $s_0=0, s_n=\sum_{i=0}^{n-1}, s_{-n}=\sum_{i=-n}^{-1}t_i, n\in\mathbb{N}.$

Given an isolated set $\Lambda$ of a vector field $X\in\mathfrak{X}^{1}(M)$,
we say that $X$ has the \textit{average shadowing property in} $\Lambda$,
if for any $\varepsilon>0$ there is a $\delta>0$ such that every $\delta$-average-pseudo orbit of $X$ in $\Lambda$ is both positively and negatively $\varepsilon$-shadowed in average by some orbit of $X$ in $\Lambda$.
If $\Lambda= M$, then we say that $X$ has the \textit{average shadowing property}.

Now we consider a double-infinite sequence $(x_i)_{i\in{\mathbb Z}}$ to introduce the two-sided limit shadowing property.
A sequence $\{(x_i, t_i)\}_{i\in\mathbb{Z}}$ is called a \textit{(two-sided) limit-pseudo orbit} of a vector field $X$,
if $t_i\geq 1$ for every $i\in\mathbb{Z}$ and $$\lim_{|i|\rightarrow\infty}d(X_{t_i}(x_i), x_{i+1})=0.$$

A limit-pseudo orbit $\{(x_i, t_i)\}_{i\in\mathbb{Z}}$ of $X$ is called to be \textit{positively (resp. negatively) shadowed in limit}
by an orbit of $X$ through a point $x$,
if there is an orientation preserving homeomorphism $h:\mathbb{R}\rightarrow\mathbb{R}$ with $h(0)=0$
such that
$$\lim_{i\rightarrow\infty} \int_{s_i}^{s_{i+1}}d(X_{h(t)}(x),X_{t-s_{i}}(x_{i}))dt=0$$

$$\left(\text{ resp. }\lim_{i\rightarrow\infty} \int_{-s_{-i}}^{-s_{-(i-1)}}d(X_{h(t)}(x),X_{t+s_{-i}}(x_{-i}))dt=0\right),$$
where $s_0=0, s_n=\sum_{i=0}^{n-1}, s_{-n}=\sum_{i=-n}^{-1}t_i, n\in\mathbb{N}.$

Given an isolated set $\Lambda$ of $X\in\mathfrak{X}^{1}(M)$,
we say that $X$ has the \textit{limit shadowing property in} $\Lambda$,
if every limit-pseudo orbit in $\Lambda$ is both positively and negatively shadowed in limit by an orbit of $X$ in $\Lambda$.
If $\Lambda=M$, then we say that $X$ has the \textit{limit shadowing property}.

In~\cite{Gu2-2009}, Gu introduced the notion of the asymptotic average shadowing property for flows
which is a generalization of the limit shadowing property in random dynamical systems.
A sequence $\{(x_i, t_i)\}_{i\in\mathbb{Z}}$ is called an \textit{asymptotic average-pseudo orbit} of $X$,
if $t_i\geq1$ for every $i\in\mathbb{Z}$ and
$$\lim_{n\rightarrow\infty}\frac{1}{n}\sum_{i=-n}^{n}d(X_{t_i}(x_i),x_{i+1})=0.$$

An asymptotic average-pseudo orbit $\{(x_i, t_i)\}_{i\in\mathbb{Z}}$ of $X$ is called to be
\textit{positively (resp. negatively) asymptotically shadowed in average} by an orbit of $X$ through $x$,
if there is an orientation preserving homeomorphism $h:\mathbb{R}\rightarrow\mathbb{R}$ with $h(0)=0$
such that
$$\lim_{n\rightarrow\infty} \frac{1}{n}\sum_{i=0}^{n}\int_{s_i}^{s_{i+1}}d(X_{h(t)}(x),X_{t-s_{i}}(x_{i}))dt=0$$
$$\left(\text{ resp. }\lim_{n\rightarrow\infty} \frac{1}{n}\sum_{i=0}^{n}\int_{-s_{-i}}^{-s_{-(i-1)}}d(X_{h(t)}(x),X_{t+s_{-i}}(x_{-i}))dt=0\right),$$
where $s_0=0, s_n=\sum_{i=0}^{n-1}, s_{-n}=\sum_{i=-n}^{-1}t_i, n\in\mathbb{N}$.

Given an isolated set $\Lambda$ of $X\in\mathfrak{X}^{1}(M)$,
we say that $X$ has the \textit{asymptotic average shadowing property in} $\Lambda$,
if every asymptotic average-pseudo orbit in $\Lambda$ is both positively and negatively asymptotically shadowed in average by some real orbit of $X$ in $\Lambda$.
If $\Lambda=M$, then we say that $X$ has the \textit{asymptotic average shadowing property}.

In this article, we prove that several shadowing properites imply non-existence of an attractor for a given $C^1$-vector field.
In addition, we focus on the notion of the two-sided limit shadowing property with a gap.
The concepts of the topological transitivity and shadowing property follow the notion.

\bigskip

\bigskip


\section{Main Results}\label{sec: def}

\bigskip

In this paper we consider a vector field $X\in\mathfrak{X}^{1}(M)$, where $M$ is a closed Riemannian manifold of
dimension $n\geq 3$ and $\Lambda$ is an isolated set of $M$ which is not a periodic orbit or a singularity.

Let $\Lambda$ be a compact invariant subset of $M$ and $X\in\mathfrak{X}^{1}(M).$
The \textit{basin} $B(A)$ of $A$ with respect to a vector field $X$ is defined by
the set $\{x\in M | \omega_X(x)\subseteq A\}$.
We say that a compact invariant set $\Lambda$ is \textit{attracting}
if there is an open set $U$ such that $X_t(\overline{U})\subset U$ for all $t>0$ and $\Lambda=\bigcap_{t\geq0}X_t(\overline{U})$.
An \textit{attractor} of $X$ is a transitive attracting set of $X$. An attractor for $-X$ is called a \textit{repeller}.
If $\emptyset\neq\Lambda\subseteq B(\Lambda)\subsetneq M$, then $\Lambda$ is called a \textit{proper} attractor or repeller.
A \textit{sink (source)} of $X$ is a attracting (repelling) critical orbit of $X$.
A point $x\in M$ is called a \textit{chain recurrent point}
if for any $\epsilon>0$ there exists an $\epsilon$-pseudo orbit of $X$ from $x$ to $x$.
A subset $A\subset M$ is called a \textit{chain recurrent set} if any $x\in A$ is a chain recurrent point.

It is obvious that chain transitivity implies non-existence of sinks and sources in $\Lambda$.
In~\cite{Ri-}, Ribeiro showed that if a $C^1$-vector field has the limit shadowing property in an isolated subset of a closed manifold $M$ then it has no proper attractor. He also showed that an invariant set is chain transitive if and only if it does not contain proper attractors.

In this section, we first deal with the notions of several shadowing properties related to the non-existence of attractors.
We focus on the notions of the average shadowing property, the asymptotic average shadowing property and the two-sided limit shadowing property with a gap.

\smallskip

\begin{prop}\label{chnoatt}~\cite[Proposition 3]{Ri-}
A vector field $X$ is chain transitive in an isolated set $\Lambda$ if and only if
$\Lambda$ has no proper attractor for $X$.
\end{prop}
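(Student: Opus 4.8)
The plan is to regard $X$ restricted to the compact invariant set $\Lambda$ as a flow on the compact metric space $\Lambda$, so that Conley's theory of attractor--repeller pairs applies, and to prove the two implications separately. The forward implication (chain transitive $\Rightarrow$ no proper attractor) is elementary and can be done by a direct trapping argument; the converse I would prove by contraposition, building an attractor out of the set of points reachable by pseudo orbits from a suitable point.

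For the forward implication, suppose for contradiction that $A\subseteq\Lambda$ is a proper attractor, with open trapping region $U$ satisfying $X_t(\overline U)\subseteq U$ for all $t>0$ and $A=\bigcap_{t\ge0}X_t(\overline U)$. Since $\overline U$ is forward invariant, writing $V=X_1(\overline U)$ gives $X_t(\overline U)=X_1\big(X_{t-1}(\overline U)\big)\subseteq V$ for every $t\ge1$, so $V$ is a compact subset of the open set $U$ and $\eta:=d\big(V,\,M\setminus U\big)>0$. Because $U\subseteq B(A)$ (forward orbits starting in $U$ remain in $\overline U$ and accumulate on $A$), properness provides a point $q\in\Lambda\setminus B(A)\subseteq\Lambda\setminus U$. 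Now take $\delta=\eta$ and any $\delta$-pseudo orbit $\{(x_i,t_i)\}$ with $x_0\in A\subseteq V$: since each $t_i$ is a positive integer, an immediate induction shows that if $x_i\in\overline U$ then $X_{t_i}(x_i)\in V$ and hence $x_{i+1}\in U$, so every point of the chain stays in $U$. Thus no $\delta$-pseudo orbit in $\Lambda$ issuing from a point of $A$ can reach $q$, contradicting chain transitivity, which would furnish a $\delta$-chain from any $p\in A$ to $q$. This in fact rules out proper attracting sets altogether.

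For the converse I would argue the contrapositive. If $\Lambda$ is not chain transitive, there are $p,q\in\Lambda$ and $\delta_0>0$ admitting no $\delta_0$-chain from $p$ to $q$ in $\Lambda$. Consider $B=\{z\in\Lambda:\text{there is a }(\delta_0/2)\text{-chain from }p\text{ to }z\}$. Appending a segment $(z,n)$ to such a chain shows $B$ is forward invariant and absorbing at scale $\delta_0/2$, namely every point of $\Lambda$ within $\delta_0/2$ of $X_n(z)$ again lies in $B$; consequently $d\big(X_1(\overline B),\,\Lambda\setminus B\big)\ge\delta_0/2$, so $\overline B$ is a trapping region with $X_1(\overline B)\subseteq\mathrm{int}_\Lambda\overline B$, and $A_p:=\bigcap_{n\ge0}X_n(\overline B)$ is a nonempty compact attracting set containing $\omega_X(p)$. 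A gap argument shows $q\notin\overline B$: if some $z\in B$ had $d(z,q)<\delta_0/2$, redirecting the final landing of its $(\delta_0/2)$-chain from $z$ to $q$ would yield a $\delta_0$-chain from $p$ to $q$. Hence $q\notin A_p$ and $A_p$ is a proper subset of $\Lambda$.

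The step I expect to be the main obstacle is turning the proper \emph{subset} $A_p$ into a genuine proper \emph{attractor}, i.e. one whose basin is not all of $\Lambda$ (equivalently, whose dual repeller $A_p^{*}$ is nonempty) and which is transitive in the sense required by the definition of an attractor. This is exactly the content of Conley's decomposition: when $A_p$ is not already proper it must be a global attractor of $\Lambda$, and one then descends to a terminal (minimal) attractor of the finest Morse decomposition of $X|_\Lambda$, which is chain transitive and has nonempty dual repeller. A second, purely technical point to watch is that the definition of pseudo orbit forces the flight times $t_i$ to be positive integers, so $\overline B$ is a priori only a trapping region for the time-one map $X_1$; one must check, via maximality of the invariant set, that $A_p=\bigcap_{t\ge0}X_t(\overline B)$ is genuinely invariant and attracting for the whole flow. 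Once these refinements are in place, non-chain-transitivity yields a proper attractor, which establishes the contrapositive and completes the equivalence.
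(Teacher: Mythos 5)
The paper does not actually prove this proposition: it is imported verbatim from Ribeiro \cite[Proposition 3]{Ri-}, so there is no in-paper argument to compare against, and your attempt must be judged on its own. Your forward implication is complete and correct, and is the standard argument: with $V=X_1(\overline U)$ one has $X_t(\overline U)\subseteq V$ for all $t\geq 1$, $\eta=d(V,M\setminus U)>0$ (note $M\setminus U\neq\emptyset$ because $U\subseteq B(A)$ while $q\in\Lambda\setminus B(A)$), and since the paper's pseudo orbits have transition times $t_i\geq 1$, your induction traps every $\eta$-chain starting in $A$ inside $U$, so no chain reaches $q$. Nothing to repair there.

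The converse is where the genuine gap sits, and it is exactly the one you flag, but your proposed repair does not work under the paper's definitions. The paper defines an \emph{attractor} to be a \emph{transitive} attracting set, and neither your $A_p$ nor any minimal attracting set inside it need be topologically transitive: take for $\Lambda$ (embedded as an isolated set in $M$, $\dim M\geq 3$) a circle carrying an attracting arc of singularities and a repelling singularity; the arc is a minimal attracting set (any neighborhood of a proper closed subarc contains other fixed points, so no smaller attracting set exists), it is chain transitive, but it has no dense orbit, and $\Lambda$ is not chain transitive --- so with ``transitive'' read topologically, non-chain-transitivity does \emph{not} yield a proper attractor, and your descent cannot succeed. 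Moreover the device you invoke, ``the finest Morse decomposition of $X|_\Lambda$,'' need not exist: Morse decompositions are finite, while the chain recurrent set of $X|_\Lambda$ may have infinitely many chain classes. What is true, and what your reachable-set construction does prove, is the equivalence with ``attractor'' read as \emph{attracting set} (equivalently, with transitivity taken in the chain sense) --- and that weaker reading is all the present paper ever uses, since Theorems \ref{th:asp}, \ref{th:aasp} and \ref{thm:tlspsp} derive their contradictions solely from a trapping neighborhood and a point of $\Lambda\setminus B(A)$. Two of your secondary worries, by contrast, are removable: nonemptiness of $\Lambda\setminus B(A_p)$ is automatic once $\emptyset\neq A_p\subsetneq\Lambda$ (pick $x\in\Lambda$ outside a trapping region; its backward orbit avoids the region, so any $y\in\alpha(x)$ satisfies $\omega(y)\cap A_p=\emptyset$), so the ``global attractor'' case cannot occur; and the time-one-map-to-flow upgrade is handled by replacing $\overline B$ with $\bigcup_{0\leq s\leq 1}X_s(\overline B)$, with the caveat that, because the paper's chains have integer transition times, the scale $\delta_0/2$ in your definition of $B$ must be shrunk against the modulus of continuity of $(x,s)\mapsto X_s(x)$, $s\in[0,1]$, for the redirect argument and the conclusion $q\notin\overline B$ to survive the enlargement.
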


\smallskip

\begin{thm}\label{th:asp}
Let $\Lambda$ be an isolated set of $X$. If $X$ has the average shadowing property in $\Lambda$
then $\Lambda$ has no proper attractor for $X$.
\end{thm}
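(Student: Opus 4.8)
The plan is to reduce the statement to chain transitivity and then invoke Proposition~\ref{chnoatt}: by that proposition $\Lambda$ has no proper attractor for $X$ as soon as $X$ is chain transitive in $\Lambda$, so it suffices to show that the average shadowing property in $\Lambda$ forces chain transitivity. Thus I fix $x,y\in\Lambda$ and $\delta>0$ and aim to produce a finite $\delta$-pseudo orbit in $\Lambda$ from $x$ to $y$. Throughout I use that every orbit arc through a point of $\Lambda$ stays in $\Lambda$, since $\Lambda$ is invariant.

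First I would encode $x$ and $y$ into a single average-pseudo orbit. Choose $\varepsilon<\delta$, let $\delta_0>0$ be the constant furnished by the average shadowing property for this $\varepsilon$, and fix a large integer $L$. Partition $\mathbb{Z}$ into consecutive blocks of length $L$ of alternating type. On an $x$-block I let the sequence trace the forward arc $X_{[0,L]}(x)$ starting at $x$, and on a $y$-block I let it trace the arc $X_{[-L,0]}(y)$ ending at $y$, both with unit time-steps. Inside a block the term $d(X_{t_i}(x_i),x_{i+1})$ vanishes, and across a block boundary it is at most $\mathrm{diam}\,\Lambda$; since boundaries occur only once per $L$ steps, choosing $L$ and the averaging length $N$ large makes every window average smaller than $\delta_0$. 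Hence this is a $\delta_0$-average-pseudo orbit in $\Lambda$, and the hypothesis yields a point $z\in\Lambda$ and an orientation preserving $h$ with $h(0)=0$ that positively $\varepsilon$-shadows it in average.

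Next I would convert the averaged bound into honest close approaches. Because the $x$-blocks occupy a positive density of indices and the positive-shadowing integrand is nonnegative, $z$ cannot stay $\varepsilon$-away from the arc of $x$ on all $x$-blocks without pushing the average above $\varepsilon$, and likewise for $y$; a short density/averaging argument therefore produces arbitrarily large times at which $X_{h(\cdot)}(z)$ is within $\varepsilon$ of the traced arc of $x$, and similarly of $y$. I select such a time $t_x$ for $x$ and a later time $t_y>t_x$ for $y$, so that $h(t_x)<h(t_y)$. By the chosen tracing directions the relevant orbit points are $X_u(x)$ with $u\ge 0$ (a forward point of $x$) and $X_w(y)$ with $w\le 0$ (a backward point of $y$). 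I then concatenate: flow from $x$ forward to $X_u(x)$; jump with error $<\varepsilon$ onto $z$'s orbit at $X_{h(t_x)}(z)$; flow forward along $z$ to $X_{h(t_y)}(z)$; jump with error $<\varepsilon$ onto $y$'s orbit at $X_w(y)$; and flow forward to $y$. This is a finite $\delta$-pseudo orbit in $\Lambda$ from $x$ to $y$, giving chain transitivity, after which Proposition~\ref{chnoatt} finishes the argument.

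The step I expect to be the real obstacle is making the endpoints land exactly on $x$ and $y$, which is the reason for the fussy choice of tracing directions. A single-jump two-piece average-pseudo orbit (the orbit of $x$ on negative indices and of $y$ on positive indices) is far easier to build, but shadowing then places $z$ near the \emph{past} of $x$ and the \emph{future} of $y$, so the concatenation runs from a backward point of $x$ to a forward point of $y$ and neither endpoint can be moved back to $x$ or $y$ by flowing forward. The alternating-block construction is designed precisely so that the close approach to $x$ occurs on a forward arc reachable from $x$, the close approach to $y$ occurs on a backward arc from which $y$ is reachable, and the intermediate $z$-arc runs forward because $t_x<t_y$. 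The remaining technical matters, namely extracting pointwise closeness from an integral bound and reconciling the reparametrization $h$ with the prescribed $t_i\ge 1$, are routine given the continuity of the flow and the freedom in choosing the approach times.
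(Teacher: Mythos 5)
Your proposal is correct in outline, but it takes a genuinely different route from the paper's --- indeed, precisely the route the paper announces it wants to avoid. You prove the stronger intermediate statement that the average shadowing property in $\Lambda$ forces chain transitivity in $\Lambda$ (essentially Gu's theorem for flows, via an alternating-block average-pseudo orbit) and then finish by quoting Ribeiro's equivalence, Proposition~\ref{chnoatt}; the paper's stated purpose is to give a \emph{direct} proof that bypasses this Gu--Ribeiro combination. The paper's argument is a contradiction built on exactly the single-jump two-piece pseudo orbit you set aside as unusable: it takes $a$ in a putative proper attractor $A$ and $b\in\Lambda\setminus B(A)$, traces the past orbit of $a$ on negative indices and the future orbit of $b$ on positive indices, and then uses the attractor structure to turn the direction mismatch you worried about into the contradiction itself. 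Negative shadowing in average yields times (your fact that the integrand cannot stay large) at which $X_{h(t_i^{*})}(z)$ lies in the attracting neighborhood $U(A,\epsilon_0/2)$; forward invariance of that neighborhood applied with $t=-h(t_i^{*})>0$ puts $z$ itself in $U(A,\epsilon_0/2)$, so the forward orbit of $z$ stays near $A$, while the forward orbit of $b$ stays in $\Lambda\setminus B(A)\supset M\setminus U(A,\epsilon_0)$; hence the positive shadowing-in-average estimate is bounded below by $\epsilon_0/2$, a contradiction. So your alternating blocks engineer forward reachability of both close approaches at the cost of a longer construction plus an external proposition, whereas the paper exploits the mismatch directly and stays self-contained; your route does buy chain transitivity as a byproduct. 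One small repair in your density step: since the $x$-block indices have density only $\tfrac12$, assuming the integrand stays $\geq\varepsilon$ on all but finitely many $x$-blocks pushes the limsup average only up to $\tfrac{\varepsilon}{2}$, which does not contradict the bound $<\varepsilon$; you should take the closeness threshold $2\varepsilon$ (equivalently, run the shadowing at precision below $\delta/2$, with further slack for converting the real shadowing times $h(t^{*})$ into the unit-step times of a $\delta$-pseudo orbit). This is a constant-bookkeeping fix, not a gap in the idea.
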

\begin{proof}
Assume that there is a proper attractor $A \subset \Lambda$.
Then $A\neq \emptyset$ and $\Lambda\setminus B(A)\neq \emptyset$.
Since $A$ is an attractor,
we can take an attracting $\frac{\epsilon_{0}}{2}$-neighborhood $U(A,\frac{\epsilon_{0}}{2})$ of $A$
such that $U(A,\epsilon_{0})\subset B(A)$.
Choose $b\in \Lambda\setminus B(A)$ and $a\in A$.
Consider the sequence $\{(x_i, t_i)\}_{i\in\mathbb{Z}}$ constructed as follows:
\begin{align*}
&x_{i}=X_{i}(a),\quad t_{i}=1,\quad i\leq 0\\
&x_{i}=X_{i}(b),\quad t_{i}=1,\quad i>0,
\end{align*}
with $i\in\zz$.
Let $\delta>0$ be given.
Fix a sufficient large integer $N_{\delta}\in\nn$ such that $\frac{d(a,X_{1}(b))}{N_{\delta}}<\delta$.
It is easy to see that for any $n\geq\ N_{\delta}$ and any $k\in\zz$
$$\frac{1}{n}\sum_{i=1}^{n}d(X_{t_{i+k}}(x_{i+k}),x_{i+k+1})\leq \frac{d(a,X_{1}(b))}{n}\leq \frac{d(a,X_{1}(b))}{N_{\delta}}<\delta.$$
Thus the above sequence $\{(x_i, t_i)\}_{i\in\mathbb{Z}}$ is a $\delta$-average-pseudo orbit of $X$ in $\Lambda$.

Suppose that the sequence $\{(x_i,t_i)\}$ can be both positively and negatively
$\frac{\epsilon_{0}}{2}$-shadowed in average by the orbit of $X$ through some point $z\in \Lambda$,
that is, there is an orientation preserving homeomorphism
$h:\mathbb{R}\rightarrow\mathbb{R}$ with $h(0)=0$ such that
$$\limsup_{n\rightarrow\infty} \frac{1}{n}\sum_{i=1}^{n}\int_{i}^{i+1}d(X_{h(t)}(z),X_{t-i}(x_{i}))dt<\frac{\epsilon_0}{2}$$
and
$$\limsup_{n\rightarrow\infty} \frac{1}{n}\sum_{i=1}^{n}\int_{-i}^{-i+1}d(X_{h(t)}(z),X_{t+i}(x_{-i}))dt<\frac{\epsilon_0}{2}.$$
Thus we have the following two facts $(\dag)$:

(1) There are infinitely many positive integers $i$ such that
$$d(X_{h(t_{i}^{*})}(z),X_{t_{i}^{*}+i}(x_{-i}))<\frac{\epsilon_{0}}{2} \quad \mbox{for some}\,\, t_{i}^{*}\in [-i,-i+1]. $$

(2) There are infinitely many positive integers $i$ such that
$$d(X_{h(t_{i}^{*})}(z),X_{t_{i}^{*}-i}(x_{i}))<\frac{\epsilon_{0}}{2} \quad \mbox{for some}\,\, t_{i}^{*}\in [i,i+1].$$
If (1) is not true, there is a positive integer $N_{0}$ such that for all $i\geq N_{0}$,
$d(X_{h(t)}(z),X_{t+i}(x_{-i}))\geq\frac{\epsilon_{0}}{2}$ for any $t\in[-i,-i+1]$.
\noindent
So,
$$\int_{-i}^{-i+1}d(X_{h(t)}(z),X_{t+i}(x_{-i}))dt\geq\frac{\epsilon_{0}}{2} \quad \mbox{for all}\,\, i\geq N_0.$$
Thus,$$\limsup_{n\rightarrow\infty} \frac{1}{n}\sum_{i=1}^{n}\int_{-i}^{-i+1}d(X_{h(t)}(z),X_{t+i}(x_{-i}))dt\geq\frac{\epsilon_{0}}{2}.$$
This is a contraction. Hence (1) holds. The proof of (2) is similar to that of (1).

Since $A$ is an $X_{t}$ invariant set, by (1), $X_{h(t_{i}^{*})}(z)\in U(A,\frac{\epsilon_{0}}{2})$.
Thus, $X_{t}(X_{h(t_{i}^{*})}(z))\in U(A,\frac{\epsilon_{0}}{2})$ for all $t>0$.
Here we can take $t=-h(t_{i}^{*})$.
Then $z\in U(A,\frac{\epsilon_{0}}{2})$.
Since $b\in \Lambda-B(A)$, the positive orbit of $b$ of $X$ is in the $\Lambda-B(A)$.
Hence we have that
$d(X_{h(t)}(z),X_{t-i}(x_{i}))\geq\frac{\epsilon_{0}}{2} $ for all $i\leq t\leq i+1$.
This implies that
$$\limsup_{n\rightarrow\infty} \frac{1}{n}\sum_{i=1}^{n}\int_{i}^{i+1}d(X_{h(t)}(z),X_{t-i}(x_{i}))dt\geq\frac{\epsilon_{0}}{2}.$$
This is a contraction, which finishes the proof.
\end{proof}

\smallskip

\begin{thm}\label{th:aasp}
Let $\Lambda$ be an isolated set of $X$. If $X$ has the aspmptotic average shadowing property in $\Lambda$
then $\Lambda$ has no proper attractor $X$.
\end{thm}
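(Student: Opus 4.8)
The plan is to argue by contradiction, reusing verbatim the construction from the proof of Theorem~\ref{th:asp} and changing only the verification that the test sequence is of the correct (asymptotic average) type. Suppose, then, that $\Lambda$ contains a proper attractor $A$, so $A\neq\emptyset$ and $\Lambda\setminus B(A)\neq\emptyset$. As before I would fix $\epsilon_0>0$ and an attracting neighborhood $U(A,\frac{\epsilon_0}{2})$ with $U(A,\epsilon_0)\subset B(A)$, choose $a\in A$ and $b\in\Lambda\setminus B(A)$, and form the same two-sided sequence $\{(x_i,t_i)\}_{i\in\mathbb{Z}}$ defined by $x_i=X_i(a),\ t_i=1$ for $i\leq 0$ and $x_i=X_i(b),\ t_i=1$ for $i>0$. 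Both halves are honest orbit segments of $X$, so $d(X_{t_i}(x_i),x_{i+1})=0$ for every $i\neq 0$, and the only junction where the sequence fails to follow an actual orbit is $i=0$, where the gap equals the fixed constant $c:=d(X_1(a),X_1(b))$.

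The one genuinely new step is to check that this sequence is an \emph{asymptotic} average-pseudo orbit, i.e.\ that its Cesàro gap-average tends to $0$. Because only the $i=0$ term survives, for every $n\in\mathbb{N}$
$$\frac{1}{n}\sum_{i=-n}^{n}d(X_{t_i}(x_i),x_{i+1})=\frac{c}{n},$$
which tends to $0$ as $n\to\infty$. Hence $\{(x_i,t_i)\}_{i\in\mathbb{Z}}$ is an asymptotic average-pseudo orbit of $X$ in $\Lambda$, and the asymptotic average shadowing property applies to it.

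From here the argument is identical to that of Theorem~\ref{th:asp}. Applying the property yields a point $z\in\Lambda$ and an orientation preserving homeomorphism $h\colon\mathbb{R}\to\mathbb{R}$ with $h(0)=0$ for which both the positive and the negative asymptotic average shadowing limits vanish; since all $t_i=1$ we may write $s_i=i$. The key elementary observation is that a sequence of nonnegative integrals whose Cesàro average tends to $0$ cannot be bounded below by $\frac{\epsilon_0}{2}$ from some index on, so exactly as before one extracts the two facts $(\dag)$: there are infinitely many $i$ with $d(X_{h(t_i^*)}(z),X_{t_i^*+i}(x_{-i}))<\frac{\epsilon_0}{2}$ for some $t_i^*\in[-i,-i+1]$, and infinitely many $i$ with $d(X_{h(t_i^*)}(z),X_{t_i^*-i}(x_i))<\frac{\epsilon_0}{2}$ for some $t_i^*\in[i,i+1]$. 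The negative-direction fact puts $X_{h(t_i^*)}(z)$ in $U(A,\frac{\epsilon_0}{2})$; choosing $t=-h(t_i^*)>0$ and using forward invariance of the attracting neighborhood forces $z\in U(A,\frac{\epsilon_0}{2})\subset B(A)$. Consequently the whole forward $h$-reparametrized orbit of $z$ stays in $U(A,\frac{\epsilon_0}{2})$, whereas $X_{t-i}(x_i)=X_t(b)$ remains in $\Lambda\setminus B(A)\subset M\setminus U(A,\epsilon_0)$; the two therefore stay at distance at least $\frac{\epsilon_0}{2}$, which makes the positive shadowing average bounded below by $\frac{\epsilon_0}{2}$ and contradicts its vanishing.

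The main (and only) obstacle is the extraction of the facts $(\dag)$, now from genuine Cesàro \emph{limits} rather than the $\limsup$ appearing in the average-shadowing case; but the limit hypothesis is if anything stronger, so the same one-line contradiction argument goes through, and the rest of the proof is word-for-word that of Theorem~\ref{th:asp}.
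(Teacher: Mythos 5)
Your proposal is correct and follows essentially the same route as the paper: the identical two-orbit concatenation with $t_i=1$, the observation that only the junction term contributes so the Ces\`aro average is $c/n\to 0$, and then the verbatim reuse of the facts $(\dag)$ and the contradiction from the proof of Theorem~\ref{th:asp}. Your only deviation is cosmetic --- you write the junction constant as $d(X_1(a),X_1(b))$ where the paper writes $d(a,X_1(b))$, and yours is in fact the accurate value, though either way it is a fixed constant and the argument is unaffected.
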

\begin{proof}
Assume that there is a proper attractor $A \subset \Lambda$.
Then $A\neq \emptyset$ and $\Lambda\setminus B(A)\neq \emptyset$.
Since $A$ is an attractor, we can take an attracting $\frac{\epsilon_{0}}{2}$-neighborhood $U(A,\frac{\epsilon_{0}}{2})$ of $A$
such that $U(A,\epsilon_{0})\subset B(A)$.
Choose $b\in \Lambda\setminus B(A)$ and $a\in A$.
Consider the sequence $\{(x_i, t_i)\}_{i\in\mathbb{Z}}$ constructed as follows:
\begin{align*}
&x_{i}=X_{i}(a),\quad t_{i}=1,\quad i\leq 0\\
&x_{i}=X_{i}(b),\quad t_{i}=1,\quad i>0,
\end{align*}
with $i\in\zz$.
It is easy to see that
$$\frac{1}{n}\sum_{i=-n}^{n}d(X_{t_i}(x_{i}),x_{i+1})\leq \frac{d(a,X_{1}(b))}{n}.$$
So,
$$\lim_{n\rightarrow\infty}\frac{1}{n}\sum_{i=-n}^{n}d(X_{t_i}(x_i),x_{i+1})=0 .$$
Thus the above sequence $\{(x_i, t_i)\}_{i\in\mathbb{Z}}$ is an asymptotic average-pseudo orbit of $X$ in $\Lambda$.
Hence it can be both positively and negatively asymptotically shadowed in average by an orbit of $X$ through some point $z\in\Lambda$
,that is, there is an orientation preserving homeomorphism
$h:\mathbb{R}\rightarrow\mathbb{R}$ with $h(0)=0$ such that
$$
\lim_{n\rightarrow\infty} \frac{1}{n}\sum_{i=1}^{n}\int_{i}^{i+1}d(X_{h(t)}(z),X_{t-i}(x_{i}))dt=0
$$
and
$$
\lim_{n\rightarrow\infty} \frac{1}{n}\sum_{i=1}^{n}\int_{-i}^{-i+1}d(X_{h(t)}(z),X_{t+i}(x_{-i}))dt=0.
$$
Then we also have the facts $(\dag)$ in Theorem \ref{th:asp}. Therefore, by following the same procedure in the proof of Theorem \ref{th:asp},
we can obtain
%
%
%
$$
\lim_{n\rightarrow\infty} \frac{1}{n}\sum_{i=1}^{n}\int_{i}^{i+1}d(X_{h(t)}(z),X_{t-i}(x_{i}))dt\geq\frac{\epsilon_{0}}{2}.
$$
This is a contraction.
\end{proof}

\smallskip

We say that a sequence $\{(x_i, t_i)\}_{i\in\mathbb{N}}$ is a \textit{positive limit-pseudo orbit} of $X$,
if $t_i\geq 1$ for every $i\in\mathbb{N}$ and
$$\lim_{i\rightarrow\infty}d(X_{t_i}(x_i), x_{i+1})=0.$$
Given an isolated set $\Lambda$ of $X\in\mathfrak{X}^{1}(M)$,
the vector field $X$ has the \textit{positive limit shadowing property in} $\Lambda$,
if every positive limit-pseudo orbit in $\Lambda$ is positively shadowed in limit by an orbit of $X$ in $\Lambda$.
If $\Lambda=M$, then we say that $X$ has the \textit{positive limit shadowing property}.

\smallskip

\begin{defn}
A (two-sided) limit-pseudo orbit $\{(x_i, t_i)\}_{i\in\mathbb{Z}}$ of $X$ is \textit{(two-sided) limit shadowed with gap $K\in\mathbb{R}$ for $X$}
if there exists a point $z\in M$ satisfying
\begin{eqnarray*}
&&\lim_{i\rightarrow\infty} \int_{s_i}^{s_{i+1}}d(X_{t+K}(z),X_{t-s_{i}}(x_{i}))dt=0,\\
&&\lim_{i\rightarrow\infty} \int_{-s_{-i}}^{-s_{-(i-1)}}d(X_{t}(z),X_{t+s_{-i}}(x_{-i}))dt=0.
\end{eqnarray*}
where $s_{0}=0, s_n=\sum_{i=0}^{n-1}t_i, s_{-n}=\sum_{i=-n}^{-1}t_i, n\in\mathbb{N}$.

Let $\Lambda$ be an isolated set of $X\in\mathfrak{X}^{1}(M)$.
For $N\in[0,\infty),$ we say that $X$ has the \textit{(two-sided) limit shadowing property with gap $N$ in $\Lambda$}
if every (two-sided) limit pseudo-orbit of $X$ in $\Lambda$ is (two-sided) limit shadowed
with gap $K\in\mathbb{R}$ with $|K|\leq N$.
If such an $N\in[0,\infty)$ exists,
we also say that $X$ has the \textit{(two-sided) limit shadowing property with a gap in $\Lambda$}.
\end{defn}

\smallskip

It is easily showed that the (two-sided) limit shadowing property with gap $0$ implies
the notion of (two-sided) limit shadowing property.
To prove the non-existence of an attractor, we need lemmas.
From the definition of the shadowing property with a gap,
it is obvious that the property implies the positive limit shadowing property.

\smallskip

\begin{lem}\label{lem:plsp}
Let $\Lambda$ be an isolated set of $X\in\mathfrak{X}^{1}(M)$. If $X$ has the (two-sided) limit shadowing property with a gap in $\Lambda$,
then $X$ and $-X$ have the positive limit shadowing property.
\end{lem}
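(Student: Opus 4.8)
The plan is to reduce each of the two assertions to one half of the gap-shadowing hypothesis: the \emph{positive-side} half will give the positive limit shadowing property for $X$, and the \emph{negative-side} half (which carries no gap) will give it for $-X$. In both cases I embed a one-sided limit-pseudo-orbit into a genuine two-sided limit-pseudo-orbit of $X$ in $\Lambda$, apply the hypothesis, and read off the relevant half. Throughout I use that $\Lambda$ is invariant, so $X_t(\Lambda)=\Lambda$ for every $t$; this keeps every constructed point inside $\Lambda$, and I take the gap-shadowing point in $\Lambda$ as well.

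For $X$ this is the ``obvious'' direction noted before the statement. Given a positive limit-pseudo-orbit $\{(x_i,t_i)\}_{i\in\mathbb N}$ of $X$ in $\Lambda$, I extend it to $\{(y_j,\tau_j)\}_{j\in\mathbb Z}$ by carrying the given data on the indices $j\geq 0$, after a harmless reindexing so that $s_0=0$ marks the start of the first segment, and filling the indices $j<0$ with a genuine orbit arc $y_j=X_{j}(q)$, $\tau_j=1$, $q\in\Lambda$. The negative indices contribute exact jumps and the finitely many junction terms do not affect a limit as $|j|\to\infty$, so this is a two-sided limit-pseudo-orbit of $X$ in $\Lambda$. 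The gap-shadowing hypothesis yields $z\in\Lambda$ and $K$ with $|K|\leq N$ satisfying the positive-side estimate; replacing the base point by $x'=X_K(z)\in\Lambda$, so that $X_{t+K}(z)=X_t(x')$, and taking $h=\mathrm{id}$ turns that estimate into exactly the positive shadowing-in-limit condition for the original pseudo-orbit.

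For $-X$ lies the real work, and I expect the main obstacle here. A positive limit-pseudo-orbit $\{(x_i,t_i)\}_{i\in\mathbb N}$ of $-X$ satisfies $d(X_{-t_i}(x_i),x_{i+1})\to 0$. The naive idea of feeding the points $x_i$ directly into the negative tail of an $X$-pseudo-orbit fails: it would force $d(X_{t_i}(x_{i+1}),x_i)\to 0$, which does not follow from $d(X_{-t_i}(x_i),x_{i+1})\to 0$, since the times $t_i$ may be unbounded and $X_{t_i}$ need not be uniformly continuous over long times. The remedy is to place the $-X$-endpoint of each segment at the negative indices: set $y_{-i}=X_{-t_i}(x_i)\in\Lambda$ and $\tau_{-i}=t_i\geq 1$ for $i\geq 1$, and fill the indices $j\geq 0$ with a genuine $X$-orbit arc in $\Lambda$. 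Then $X_{\tau_{-i}}(y_{-i})=x_i$, so the jump $d(X_{\tau_{-i}}(y_{-i}),y_{-i+1})=d(x_i,X_{-t_{i-1}}(x_{i-1}))$ tends to $0$ by the reindexed $-X$-hypothesis, and $\{(y_j,\tau_j)\}_{j\in\mathbb Z}$ is a genuine two-sided limit-pseudo-orbit of $X$ in $\Lambda$.

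Applying the gap-shadowing hypothesis gives $z\in\Lambda$ satisfying the negative-side estimate. Using $y_{-i}=X_{-t_i}(x_i)$ I simplify $X_{t+s_{-i}}(y_{-i})=X_{t+s_{-(i-1)}}(x_i)$, and the substitution $u=-t$ converts $X_t(z)$ into $(-X)_u(z)$ and $X_{t+s_{-(i-1)}}(x_i)$ into $(-X)_{u-s_{-(i-1)}}(x_i)$, with the integration limits becoming $[s_{-(i-1)},s_{-i}]$. Since the partial sums satisfy $s_{-(i-1)}=\sigma_i$ and $s_{-i}=\sigma_{i+1}$, where $\sigma_m=\sum_{l=1}^{m-1}t_l$ are the time-markers of the $-X$-pseudo-orbit, the negative-side estimate becomes exactly $\int_{\sigma_i}^{\sigma_{i+1}}d((-X)_u(z),(-X)_{u-\sigma_i}(x_i))\,du\to 0$, i.e.\ the positive shadowing-in-limit condition for $-X$ with shadowing point $z\in\Lambda$ and $h=\mathrm{id}$ (note that the negative-side condition carries no gap, so no base-point shift is needed here). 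This gives the positive limit shadowing property for $-X$ and completes the argument.
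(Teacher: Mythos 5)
Your proof is correct, and it supplies substantially more than the paper does: the paper offers no proof of this lemma at all, only the preceding remark that ``from the definition of the shadowing property with a gap, it is obvious that the property implies the positive limit shadowing property'' --- a remark that at best covers the $X$ half of the claim (fill the negative indices with a true orbit, absorb the gap $K$ into the base point $x'=X_K(z)$, take $h=\mathrm{id}$), exactly as in your first paragraph. The genuinely non-obvious content is the $-X$ half, which the paper silently asserts, and there your diagnosis and remedy are exactly right: the naive embedding $y_{-i}=x_i$ would require $d(X_{t_{i-1}}(x_i),x_{i-1})\to 0$, which does not follow from $d(X_{-t_{i-1}}(x_{i-1}),x_i)\to 0$ when the times $t_i$ are unbounded, whereas your choice $y_{-i}=X_{-t_i}(x_i)$, $\tau_{-i}=t_i$ makes each segment end \emph{exactly} at $x_i$, so the jumps $d(x_i,X_{-t_{i-1}}(x_{i-1}))$ tend to $0$ by hypothesis. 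The bookkeeping also checks out: $s_{-i}-t_i=s_{-(i-1)}$ gives $X_{t+s_{-i}}(y_{-i})=X_{t+s_{-(i-1)}}(x_i)$, the substitution $u=-t$ converts the negative-side condition into $\int_{\sigma_i}^{\sigma_{i+1}}d((-X)_u(z),(-X)_{u-\sigma_i}(x_i))\,du\to 0$ with $\sigma_i=\sum_{l=1}^{i-1}t_l$, and you correctly observe that the negative-side condition in the paper's gap definition carries no shift $K$, so no base-point adjustment is needed for $-X$. One harmless gloss: the paper's definition of ``limit shadowed with gap $K$'' only requires $z\in M$, so your insistence on $z\in\Lambda$ is a mild strengthening of the stated hypothesis, but it matches the evident intent (the lemma's conclusion is meant ``in $\Lambda$,'' and all shadowing points elsewhere in the paper are taken in $\Lambda$), so this is a defect of the paper's definitions rather than of your argument.
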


\smallskip

For each $x\in M$ the \textit{stable set} of $x$ is the set
$$W^{s}(x)=\{y\in M |\lim_{t\rightarrow\infty}d(X_{t}(x), X_{t}(y))=0 \},$$
and the \textit{unstable set} of $x$ is the set
$$W^{u}(x)=\{y\in M |\lim_{t\rightarrow\infty}d(X_{-t}(x), X_{-t}(y))=0 \}.$$


\smallskip

Next lemma plays an important role to prove the non-existence of an attractor.

\smallskip

\begin{lem}\label{lem:nonempty}
Let $\Lambda$ be an isolated set of $X\in\mathfrak{X}^{1}(M)$.
If $X$ has the (two-sided) limit shadowing property with gap $N$ in $\Lambda$,
then for every $x,y\in \Lambda$ there is a $K\in\mathbb{R}$, with $|K|\leq N$, satisfying
$$W^{s}(X_{-K}(x))\cap W^{u}(y)\neq \emptyset.$$
\end{lem}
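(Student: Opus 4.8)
The plan is to construct an explicit two-sided limit-pseudo orbit that follows the backward orbit of $y$ on the negative side and the forward orbit of $x$ on the positive side, feed it to the (two-sided) limit shadowing property with gap $N$, and then read off the desired intersection point from the two shadowing conditions.

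First I would set $t_i = 1$ for all $i$ and define $x_i = X_i(y)$ for $i \le 0$ and $x_i = X_i(x)$ for $i > 0$. Because $x, y \in \Lambda$ and $\Lambda$ is invariant, every $x_i$ lies in $\Lambda$. For $i \le -1$ and for $i \ge 1$ we have $X_{t_i}(x_i) = x_{i+1}$ exactly, so $d(X_{t_i}(x_i), x_{i+1}) = 0$; the only nonzero gap occurs at $i=0$. Hence $\lim_{|i|\to\infty} d(X_{t_i}(x_i), x_{i+1}) = 0$ and $\{(x_i, t_i)\}_{i\in\mathbb{Z}}$ is a genuine two-sided limit-pseudo orbit of $X$ in $\Lambda$. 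Since $t_i = 1$, the partial sums are $s_n = n$ and $s_{-n} = n$ for $n \in \mathbb{N}$, so on the positive side $X_{t-s_i}(x_i) = X_t(x)$ for $t \in [i, i+1]$, and on the negative side $X_{t+s_{-i}}(x_{-i}) = X_t(y)$ for $t \in [-i, -i+1]$. Applying the hypothesis, there exist $z \in M$ and $K \in \mathbb{R}$ with $|K| \le N$ such that the two limit-shadowing-with-gap conditions hold; after the substitutions above they read $\lim_{i\to\infty}\int_i^{i+1} d(X_{t+K}(z), X_t(x))\,dt = 0$ and $\lim_{i\to\infty}\int_{-i}^{-i+1} d(X_t(z), X_t(y))\,dt = 0$.

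The key analytic step is to upgrade these averaged statements to pointwise convergence along the flow. Writing $g(t) = d(X_{t+K}(z), X_t(x))$, compactness of $M$ gives a bound $C = \sup_{p \in M}\|X(p)\| < \infty$, whence $d(X_a(p), X_b(p)) \le C|a-b|$ for every $p$; combined with the $1$-Lipschitz property of the Riemannian distance this makes $g$ globally $2C$-Lipschitz and nonnegative. A nonnegative uniformly Lipschitz function whose integral over consecutive unit intervals tends to $0$ must itself tend to $0$, so $g(t) \to 0$ as $t \to \infty$; writing $u = t+K$, this reads $d(X_u(X_{-K}(x)), X_u(z)) \to 0$, i.e. $z \in W^{s}(X_{-K}(x))$. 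The same Lipschitz argument applied to the negative condition gives $d(X_t(z), X_t(y)) \to 0$ as $t \to -\infty$, i.e. $z \in W^{u}(y)$. Hence $z \in W^{s}(X_{-K}(x)) \cap W^{u}(y)$, and the intersection is nonempty.

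The main obstacle I anticipate is precisely the integral-to-pointwise upgrade: the stable and unstable sets are defined by honest pointwise limits, whereas the shadowing property only delivers averaged (integral) convergence, and the uniform Lipschitz bound coming from compactness of $M$ and $C^{1}$ regularity of $X$ is what bridges this gap. A secondary point requiring care is the bookkeeping of the sign of the gap, so that the conclusion lands on $W^{s}(X_{-K}(x))$ rather than $W^{s}(X_{K}(x))$, and that the gap appears only on the positive half of the orbit as in the definition.
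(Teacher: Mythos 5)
Your proof is correct and takes essentially the same route as the paper: the same concatenated pseudo-orbit ($x_i=X_i(y)$ for $i\le 0$, $x_i=X_i(x)$ for $i>0$, all $t_i=1$), followed by applying the gap-$N$ shadowing and translating the two integral limits into $z\in W^{s}(X_{-K}(x))\cap W^{u}(y)$. Your explicit Lipschitz argument (uniform speed bound from compactness of $M$) upgrading unit-interval integral convergence to pointwise convergence along the flow is in fact a careful justification of the step the paper dismisses with the phrase ``by the continuity of distance,'' which as stated is not by itself a proof.
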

\begin{proof}
Consider the following sequence:
\begin{eqnarray*}
z_{i}=X_{i}(x),\quad t_{i}=1,\quad i\geq0\\
 z_{i}=X_{i}(y),\quad t_{i}=1,\quad i<0
\end{eqnarray*}
Clearly the sequence $\{(z_i, t_i)\}_{i\in\mathbb{Z}}$ is a (two-sided) limit-pseudo orbit of $X$.
Thus there exist $z\in \Lambda$ and $K\in[-N,N]$ satisfying
$$\lim_{i\rightarrow\infty} \int_{i}^{i+1}d(X_{t+K}(z),X_{t-i}(z_{i}))dt=\lim_{i\rightarrow\infty} \int_{-i}^{-(i-1)}d(X_{t}(z),X_{t+i}(z_{-i}))dt=0.$$
That is,
$$\lim_{i\rightarrow\infty} \int_{i}^{i+1}d(X_{t+K}(z),X_{t}(x))dt=\lim_{i\rightarrow\infty} \int_{-i}^{-(i-1)}d(X_{t}(z),X_{t}(y))dt=0.$$
Thus, by the continuity of distance, we have that
$$ \lim_{t\rightarrow\infty} d(X_{t}(X_{K}(z)),X_{t}(x))=\lim_{t\rightarrow-\infty} d(X_{t}(z),X_{t}(y))=0.$$
This implies that $X_K(z)\in W^{s}(x)$ and $z\in W^{u}(y)$.
Hence, $z\in W^{s}(X_{-K}(x))\cap W^{u}(y)\neq \emptyset$.
\end{proof}

\smallskip

\begin{thm}\label{thm:tlspsp}
Let $\Lambda$ be an isolated set of $X\in\mathfrak{X}^{1}(M)$.
If $X$ has the the (two-sided) limit shadowing property with gap $N$ in $\Lambda$,
then $\Lambda$ has no proper attractor for $X$.
\end{thm}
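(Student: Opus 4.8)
The plan is to argue by contradiction, taking Lemma~\ref{lem:nonempty} as the engine that produces a single orbit interpolating between the attractor and a point outside its basin. Suppose $\Lambda$ contains a proper attractor $A$, so that $A\neq\emptyset$ and $\Lambda\setminus B(A)\neq\emptyset$, exactly as at the start of the proofs of Theorems~\ref{th:asp} and~\ref{th:aasp}. Fix $a\in A$ and $b\in\Lambda\setminus B(A)$; since $b\notin B(A)$ we have $\omega_X(b)\not\subseteq A$. The goal is to exhibit a point $z$ whose backward orbit is slaved to $a$ and whose forward orbit is slaved to $b$, and then to show that the attracting structure forces $b$ into $B(A)$ after all.

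Before applying the lemma I would record the trapping property of the attractor. As $A$ is attracting, fix an open set $U$ with $X_t(\overline U)\subset U$ for all $t>0$ and $A=\bigcap_{t\geq0}X_t(\overline U)$; in particular $A\subset U$. The consequence I will use is: if some backward iterate of a point lands in $U$, then the point itself lies in $U$, and once in $U$ its whole forward orbit stays in $\overline U$, so its $\omega$-limit set is contained in $A$, i.e. the point lies in $B(A)$.

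Now apply Lemma~\ref{lem:nonempty} with $x=b$ and $y=a$: there exist $K\in\mathbb{R}$ with $|K|\leq N$ and a point $z\in W^{s}(X_{-K}(b))\cap W^{u}(a)$. From $z\in W^{u}(a)$ the backward orbits of $z$ and $a$ converge, so $\alpha_X(z)=\alpha_X(a)\subseteq A$ because $A$ is invariant and contains $a$. From $z\in W^{s}(X_{-K}(b))$ the forward orbits of $z$ and $X_{-K}(b)$ converge, so $\omega_X(z)=\omega_X(X_{-K}(b))=\omega_X(b)$, a time shift leaving the $\omega$-limit set unchanged.

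The decisive and only delicate step is to convert $\alpha_X(z)\subseteq A$ into $z\in B(A)$. Since $\alpha_X(z)$ is a nonempty compact set contained in the open set $U$, some backward iterate $X_{-t_0}(z)$ with $t_0>0$ genuinely lies in $U$ (not merely a subsequential limit), and then $z=X_{t_0}(X_{-t_0}(z))\in X_{t_0}(\overline U)\subset U$ by the trapping property; hence $\omega_X(z)\subseteq A$. Combining this with the previous paragraph gives $\omega_X(b)=\omega_X(z)\subseteq A$, i.e. $b\in B(A)$, contradicting the choice of $b$. I expect the trapping argument to be the main obstacle: one must pass from the inclusion of the $\alpha$-limit set in $U$ to an actual iterate inside $U$, and then verify both that the forward orbit is captured and that $\omega_X(z)\subseteq\bigcap_{s\geq0}X_s(\overline U)=A$. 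Everything else reduces to routine manipulation of stable, unstable, and limit sets.
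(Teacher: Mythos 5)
Your proof is correct and follows essentially the same route as the paper: assume a proper attractor $A$ with trapping neighborhood $U$, pick $a\in A$ and $b$ outside the basin, and apply Lemma~\ref{lem:nonempty} with $x=b$, $y=a$ to obtain $z\in W^{s}(X_{-K}(b))\cap W^{u}(a)$. The only difference is the endgame: the paper exhibits a single time $T$ with $X_{-T}(z)\in U$ and $X_{T}(z)\in\Lambda\setminus\overline{U}$, contradicting forward invariance of the trapping region, whereas you run the same trapping argument through the limit sets ($\alpha_X(z)\subseteq A$ forces $z\in U$, hence $\omega_X(b)=\omega_X(z)\subseteq A$, so $b\in B(A)$) --- an equivalent and, if anything, slightly more carefully justified contradiction.
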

\begin{proof}
Assume that there is a proper attractor $A \subset \Lambda$.
Then there is a neighborhood $U$ of $A$ such that
$A=\bigcap_{t\geq0}X_t(\overline{U})$ and $X_t(\overline{U})\subset U$.
Let $B=\bigcap_{t\geq0}X_{-t}(\Lambda - \overline{U}).$
Choose $a\in A$ and $b\in B$. By Lemma~\ref{lem:nonempty}, there is a $z\in W^{s}(X_{-K}(b))\cap W^{u}(a)$.
Since $ \lim_{t\rightarrow\infty} d(X_{t}(z),X_{t}(X_{-K}(b)))=0$ and $ \lim_{t\rightarrow-\infty} d(X_{t}(z),X_{t}(a))=0$,
there is a sufficiently large $T>0$ such that $X_{-T}(z)\in U$ and $X_{T}(z)\in \Lambda - \overline{U}$.
This contradicts the definition of $U$.
\end{proof}

\smallskip

Finally, we characterize the notion of the (two-sided) limit shadowing property with a gap.
The property is stronger than the notions of transitivity and shadowing property as we shall see.
Let $\Lambda$ be an isolated set of $X\in\mathfrak{X}^{1}(M)$. We say that $X$ is \textit{topologically transitive in $\Lambda$}
if for any nonempty open subsets $U$ and $V$ of $M$ there is some $t\in\mathbb{R}$ such that $X_t(U)\cap V\neq\varnothing$.
If $\Lambda=M$ we simply say that $X$ is \textit{topologically transitive}.
From the definitions, the following lemma is directly proved.

\smallskip

\begin{lem}\label{lem:chsptt}
Let $\Lambda$ be an isolated set of $X\in\mathfrak{X}^{1}(M)$. If $X$ is chain transitive and has the shadowing property in $\Lambda$,
then $X$ is topologically transitive in $\Lambda$.
\end{lem}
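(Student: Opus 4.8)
The plan is to produce, for any pair of target open sets, a single genuine $X$-orbit that starts in the first set and later visits the second; this is obtained by concatenating the pseudo-orbit supplied by chain transitivity with honest orbit segments and then invoking the shadowing property. Concretely, let $U$ and $V$ be nonempty open subsets of $M$ meeting $\Lambda$, and choose $x\in U\cap\Lambda$ and $y\in V\cap\Lambda$. Fix $\varepsilon>0$ small enough that $B(x,\varepsilon)\subseteq U$ and $B(y,\varepsilon)\subseteq V$. Since $X$ has the shadowing property in $\Lambda$, this $\varepsilon$ yields a $\delta>0$ such that every $\delta$-pseudo orbit of $X$ in $\Lambda$ is $\varepsilon$-shadowed by a real orbit of $X$ in $\Lambda$. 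By chain transitivity in $\Lambda$ there is a finite $\delta$-pseudo orbit $\{(x_i,t_i)\}_{0\le i\le K}$ in $\Lambda$ with $x_0=x$ and $x_K=y$, where $K\ge 1$ and each $t_i\ge 1$.

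The key step is to upgrade this finite pseudo-orbit to a bi-infinite one, because the shadowing property is stated for sequences indexed by $\mathbb{Z}$. I would glue genuine orbit segments onto both ends: for indices $i\ge K$ take the pair $(X_{i-K}(y),1)$, for $i<0$ take the pair $(X_i(x),1)$, and keep the middle pairs $\{(x_i,t_i)\}_{0\le i\le K-1}$. Since $\Lambda$ is invariant, every point of this new sequence lies in $\Lambda$; and since each glued consecutive pair is an exact time-one image (for instance $X_1(X_{i-K}(y))=X_{i-K+1}(y)$, and at $i=-1$ one has $X_1(X_{-1}(x))=x=x_0$), the new junctions contribute distance $0<\delta$, while the middle junctions contribute $<\delta$ by construction. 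Hence the combined sequence is a genuine bi-infinite $\delta$-pseudo orbit of $X$ in $\Lambda$. Keeping track of the partial sums $s_n$ for this glued sequence (so that $s_0=0$ and $s_K=t_0+\cdots+t_{K-1}$) is the part of the argument that requires care.

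Applying the shadowing property then gives a point $z\in\Lambda$ and an orientation-preserving homeomorphism $h$ with $h(0)=0$ satisfying $d(X_{h(t)}(z),X_{t-s_i}(x_i))<\varepsilon$ on each segment $s_i\le t\le s_{i+1}$. Evaluating this at the left endpoint $t=s_0=0$ gives $d(z,x)<\varepsilon$, so $z\in U$; evaluating it at $t=s_K$, the left endpoint of the $K$-th segment where the shadowed point is $X_0(y)=y$, gives $d(X_{h(s_K)}(z),y)<\varepsilon$, so $X_{h(s_K)}(z)\in V$. Setting $\tau=h(s_K)$, which is positive since $s_K\ge K\ge 1$ and $h$ is orientation preserving with $h(0)=0$, we obtain $X_\tau(z)\in X_\tau(U)\cap V$, whence $X_\tau(U)\cap V\neq\varnothing$ and $X$ is topologically transitive in $\Lambda$. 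I expect the only genuine obstacle to be the finite-to-bi-infinite extension together with the correct identification of the two endpoint times $s_0$ and $s_K$ in the shadowing estimate; the remaining verifications are routine.
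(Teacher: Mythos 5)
Your proof is correct: the paper states this lemma without proof, remarking only that it ``is directly proved'' from the definitions, and your argument is precisely that direct verification. The one step the definitions genuinely force --- upgrading the finite $\delta$-chain from $x$ to $y$ to a bi-infinite $\delta$-pseudo orbit by gluing the exact backward orbit of $x$ and forward orbit of $y$ (valid since $\Lambda$ is invariant, so the glued junctions contribute distance $0$) --- is handled correctly, as are the endpoint evaluations at $t=s_0=0$ and $t=s_K$ using $h(0)=0$ and the monotonicity of $h$ to get $\tau=h(s_K)>0$.
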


\smallskip

We obtain the following theorem which expresses the relation between the notions of chain transitivity, limit shadowing property and shadowing property in a flow version.
The next proposition is essential to prove the last theorem which is dealt with the notion of limit shadowing property with a gap.
Following the proof of Theorem 7.3 in ~\cite{KKO-2014} we prove the following.

\smallskip

\begin{prop}\label{thm:chplspsp}
Let $\Lambda$ be an isolated set of $X\in\mathfrak{X}^{1}(M)$.
If $X$ is chain transitive and has the positive limit shadowing property in $\Lambda$, then $X$ has the shadowing property.
\end{prop}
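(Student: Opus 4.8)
The plan is to argue by contradiction: I would assemble a single positive limit-pseudo orbit out of many ``bad'' finite pseudo orbits glued together by chain-transitivity bridges, apply the positive limit shadowing property to trace the whole thing, and then show that this tracing forces the bad pieces to be $\varepsilon_0$-shadowed after all. This is the flow analogue of the discrete argument of \cite{KKO-2014}.

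First I would reduce non-shadowing to a statement about \emph{finite} pseudo orbits. Suppose $X$ does not have the shadowing property in $\Lambda$. Then there is an $\varepsilon_0>0$ so that for every $\delta>0$ some $\delta$-pseudo orbit in $\Lambda$ fails to be $\varepsilon_0$-shadowed in $\Lambda$. Using compactness of $\Lambda$ (a closed invariant subset of the closed manifold $M$) together with the usual truncation-and-limit argument --- $\varepsilon_0$-shadow longer and longer finite initial segments of such a pseudo orbit by orbits through points $w_m$, pass to a convergent subsequence $w_m\to w$, and extract a limiting reparametrization --- one sees that uniform $\varepsilon_0$-shadowability of all finite $\delta$-pseudo orbits would force the infinite one to be $\varepsilon_0$-shadowable. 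Hence for each $n\in\mathbb{N}$ there is a \emph{finite} $\frac1n$-pseudo orbit $Q_n$ in $\Lambda$ that is not $\varepsilon_0$-shadowed by any orbit of $X$ in $\Lambda$. Write $Q_n$ with initial point $a_n$ and terminal point $b_n$. Since $X$ is chain transitive in $\Lambda$, for each $n$ there is a finite $\frac1n$-pseudo orbit $R_n$ in $\Lambda$ from $b_n$ to $a_{n+1}$. Concatenating $Q_1R_1Q_2R_2\cdots$ gives a one-sided sequence $\{(z_i,t_i)\}_{i\in\mathbb{N}}$ in $\Lambda$ with all $t_i=1$, whose jump errors coming from the block $Q_n$ (and $R_n$) are at most $\frac1n$. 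Because these blocks sit at indices tending to $\infty$, the errors $d(X_{t_i}(z_i),z_{i+1})$ tend to $0$, so the concatenation is a positive limit-pseudo orbit of $X$ in $\Lambda$.

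By the positive limit shadowing property there is a point $w\in\Lambda$ and an orientation preserving $h:\mathbb{R}\to\mathbb{R}$, $h(0)=0$, positively shadowing the concatenation in limit. Since the tracing is ``in limit,'' the integral over each single window $[s_i,s_{i+1}]$ is eventually arbitrarily small; in particular, for $n$ large every index of the block $Q_n$ is traced with arbitrarily small integral error. After a harmless time-shift of $w$ (to renormalize $h$ to vanish at the start of that block) this should say that an orbit of $X$ in $\Lambda$ genuinely $\varepsilon_0$-shadows the finite pseudo orbit $Q_n$, contradicting the choice of $Q_n$ and closing the argument via Proposition~\ref{chnoatt}-free elementary reasoning.

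The main obstacle is exactly this last conversion: turning the integral, ``in limit'' estimate into the \emph{pointwise} bound $d(X_{h(t)}(w),X_{t-s_i}(z_i))<\varepsilon_0$ for all $t\in[s_i,s_{i+1}]$ demanded by the definition of $\varepsilon_0$-shadowing, while simultaneously controlling the reparametrization $h$. This is where the flow setting genuinely departs from the discrete case. The remedy is uniform continuity of the flow on the compact manifold over the unit windows: since the integrand is nonnegative, a small integral on $[s_i,s_{i+1}]$ forces the two orbit segments to be close at some $t^*$ in the window, and continuous dependence on initial conditions over a unit time then propagates this closeness across the whole window, provided $h$ is chosen on each window to be an approximate unit-speed shift (so that $h(t)-h(t^*)\approx t-t^*$). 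Because the traced orbit follows the pseudo orbit closely on average over consecutive unit windows, these local reparametrizations can be patched into a single orientation-preserving $h$ with only mild cumulative distortion, yielding the honest $\varepsilon_0$-shadowing of $Q_n$.
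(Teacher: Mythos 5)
Your proposal follows the same route as the paper's proof, which is itself the flow version of Theorem~7.3 of \cite{KKO-2014}: choose finite $\frac{1}{n}$-pseudo orbits that cannot be $\epsilon$-shadowed, bridge them with chain-transitivity segments, observe that the infinite concatenation $\alpha_1\beta_1\alpha_2\beta_2\cdots$ is a positive limit-pseudo orbit, trace it in limit by some $z\in\Lambda$, and contradict the choice of the blocks. You are, however, more careful than the paper at two points. First, you justify the existence of the finite bad pseudo orbits by a truncation-and-compactness reduction; the paper simply asserts it, which matters since its pseudo orbits are defined as bi-infinite sequences. Second, and more substantively, you explicitly flag the step at which the paper's own proof has a genuine lacuna: from $\int_{s_i}^{s_{i+1}}d(X_{h(t)}(z),X_{t-s_i}(x_i))\,dt<\epsilon$ the paper concludes $d(X_{h(t)}(z),X_{t-s_i}(x_i))<\epsilon$ for \emph{all} $t\in[s_i,s_{i+1}]$, which is a non sequitur: since $t_i\geq 1$, a small integral over the window only forces the distance to be small at \emph{some} time $t^*$ in it, not throughout. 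Your remedy --- locate such a $t^*$ in each window and propagate the closeness across the unit window by uniform continuity of the flow on the compact manifold --- is exactly what a correct write-up needs, and in this respect your proposal is sounder than the printed proof.

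That said, your patch is itself only sketched, and one point in it is stated backwards: the reparametrization $h$ is \emph{given} by the positive limit shadowing property, not chosen, so you cannot stipulate that $h$ is an approximate unit-speed shift on each window. What the argument actually requires is the construction of a \emph{new} orientation-preserving homeomorphism witnessing the $\epsilon$-shadowing of the bad block $\alpha_n$, built by interpolating between the closeness times $t^*_i$ of consecutive windows; and verifying that the increments $h(t^*_{i+1})-h(t^*_i)$ are comparable to $t^*_{i+1}-t^*_i$ (so that continuous dependence over bounded times applies) needs a real argument, since time along orbits can be badly distorted near singularities of $X$, which $\Lambda$ may contain even though it is assumed not to be a singularity itself. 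So the final ``patching'' paragraph identifies the right obstacle and the right tools but does not yet close the gap --- though it should be said that the paper's own proof does not close it either.
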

\begin{proof}
Suppose on the contrary that $X$ does not have shadowing. Then there is $\epsilon>0$ such that for any $n\in\mathbb{N}$
there is a finite $\frac{1}{n}$-pseudo orbit $\alpha_{n}=\{(x_{n,i}, t_{n,i}) \}$ which cannot be $\epsilon$-shadowed by
any point in $\Lambda$. Using chain transitivity, for every $n$ there exists a $\frac{1}{n}$-pseudo orbit
$\beta_{n}=\{(y_{n,i}, \overline{t_{n,i}}) \}$ such that the sequence $\alpha_n \beta_n \alpha_{n+1}$ forms a finite
$\frac{1}{n}$-pseudo orbit. Then the infinite concatenation
$$
\alpha_1 \beta_1 \alpha_2 \beta_2 \alpha_3 \beta_3 \ldots
$$
is a positive limit-pseudo orbit denoted by $\{(x_i, t_i)\}_{i\in\mathbb{N}}$. By positive limit shadowing property,
it is positively $\epsilon$-shadowed by some point $z\in\Lambda$. So, there is an orientation preserving homeomorphism
$h:\mathbb{R}\rightarrow\mathbb{R}$ with $h(0)=0$ such that
$$
\lim_{i\rightarrow\infty} \int_{s_i}^{s_{i+1}}d(X_{h(t)}(x),X_{t-s_{i}}(x_{i}))dt=0,
$$
where $s_{0}=0, s_n=\sum_{i=0}^{n-1}t_i, s_{-n}=\sum_{i=-n}^{-1}t_i, n\in\mathbb{N}$.

Therefore there is $N\in\mathbb{N}$ such that for every $i\geq N$,
$$
\int_{s_i}^{s_{i+1}}d(X_{h(t)}(z),X_{t-s_{i}}(x_{i}))dt<\epsilon.
$$
Thus, for each $i\geq N$, we have
$$
d(X_{h(t)}(z),X_{t-s_{i}}(x_{i}))<\epsilon \quad \mbox{for all $ s_i\leq t \leq s_{i+1}$ }.
$$
This means that there is a finite pseudo orbit $\alpha_n$ which is $\epsilon$-shadowed by some point, which is a contradiction.
\end{proof}

\smallskip
Now we are ready to prove the following theorem.
\begin{thm}
Let $\Lambda$ be an isolated set of $X\in\mathfrak{X}^{1}(M)$.
If $X$ has the the (two-sided) limit shadowing property with a gap in $\Lambda$, then $X$ is topologically transitive and
has the shadowing property in $\Lambda$.
\end{thm}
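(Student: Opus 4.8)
The plan is to assemble the theorem entirely from the results already established, since each ingredient has been prepared by the preceding lemmas and propositions. Both conclusions—topological transitivity and the shadowing property in $\Lambda$—will flow once I establish that $X$ is chain transitive in $\Lambda$, so the first task is to extract chain transitivity from the gap hypothesis.

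First I would unwind the definition: by hypothesis $X$ has the (two-sided) limit shadowing property with a gap in $\Lambda$, so there exists some $N\in[0,\infty)$ for which $X$ has the property with gap $N$. With this fixed $N$, Theorem \ref{thm:tlspsp} applies and tells me that $\Lambda$ has no proper attractor for $X$. Invoking Proposition \ref{chnoatt} (Ribeiro's characterization), the absence of a proper attractor is equivalent to chain transitivity, so $X$ is chain transitive in $\Lambda$.

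Next I would produce the shadowing property. Lemma \ref{lem:plsp} converts the gap hypothesis into the positive limit shadowing property for $X$ (and for $-X$, though only the statement for $X$ is needed here). Now $X$ is both chain transitive and positively limit shadowing in $\Lambda$, which are precisely the hypotheses of Proposition \ref{thm:chplspsp}; applying it yields the shadowing property in $\Lambda$. Finally, having chain transitivity together with the shadowing property, Lemma \ref{lem:chsptt} delivers topological transitivity in $\Lambda$, completing both halves of the statement.

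Since every step is a direct citation of an earlier result, I do not anticipate a genuine technical obstacle; the substantive work has already been done in Lemma \ref{lem:nonempty} and Theorem \ref{thm:tlspsp}. The only point requiring care is bookkeeping of the hypotheses: I must confirm that the single gap $N$ furnished by the ``with a gap'' assumption simultaneously feeds Theorem \ref{thm:tlspsp} and Lemma \ref{lem:plsp}, and that the shadowing conclusion of Proposition \ref{thm:chplspsp} is genuinely ``in $\Lambda$'' rather than on all of $M$, so that Lemma \ref{lem:chsptt} can be applied verbatim. Once these references are matched correctly, the argument is a short chain of implications.
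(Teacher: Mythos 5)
Your proposal is correct and follows essentially the same route as the paper's proof: chain transitivity via Theorem~\ref{thm:tlspsp} and Proposition~\ref{chnoatt}, positive limit shadowing via Lemma~\ref{lem:plsp}, then shadowing via Proposition~\ref{thm:chplspsp} and transitivity via Lemma~\ref{lem:chsptt}. Your bookkeeping remark about the ``in $\Lambda$'' scope of Proposition~\ref{thm:chplspsp} is a fair observation (the paper's statement of that proposition omits the phrase, apparently by oversight), but it does not change the argument.
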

\begin{proof}
By Lemma~\ref{lem:plsp} $X$ has the positive limit shadowing property.
Also, Proposition~\ref{chnoatt} and Theorem~\ref{thm:tlspsp} say that $X$ is chain transitive.
Then, by Lemma~\ref{lem:chsptt} and Proposition~\ref{thm:chplspsp},
we conclude that $X$ is topologically transitive and has the shadowing property in $\Lambda$.
\end{proof}





\bigskip

\bigskip


\section{Acknowledgements}

\bigskip

This research has been performed as a subproject of project Research for Applications of Mathematical Principles (No.C21501) and supported by the National Institute of Mathematics Sciences (NIMS). 

\bigskip

\bigskip

\bigskip

\end{document}